\documentclass[11pt,a4paper]{amsart}

\usepackage{graphics, amsmath, amsfonts, amssymb, amsthm, amscd, mathrsfs, amsrefs, color}
\input xy

\usepackage[mathscr]{eucal}
 \usepackage{extarrows}

\textwidth146.67truemm
\textheight248.2truemm
\hoffset-12truemm
\voffset-9truemm
\theoremstyle{plain}
\newtheorem{Thm}{Theorem}[section]
\newtheorem{Cor}[Thm]{Corollary}
\newtheorem{Lem}[Thm]{Lemma}
\newtheorem{Prop}[Thm]{Proposition}

\theoremstyle{definition}

\theoremstyle{remark}
\newtheorem{Rem}[Thm]{Remark}




\newcommand{\N}{\ensuremath{\mathbb{N}}}
\newcommand{\C}{\ensuremath{\mathbb{C}}}
\newcommand{\PP}{\ensuremath{\mathbb{P}}}
\newcommand{\cO}{{\ensuremath{\mathcal{O}}}}

\numberwithin{equation}{section}

\begin{document}

\title[Embedding  open Riemann  surfaces  with isolated punctures into $\C^2$] 
{Embedding  Riemann  surfaces  with isolated punctures into  the complex plane }
\author{Frank Kutzschebauch}
\address{Departement Mathematik\\
Universit\"at Bern\\
Sidlerstrasse 5, CH--3012 Bern, Switzerland}
\email{frank.kutzschebauch@math.unibe.ch}
\author{Pierre-Marie Poloni}
\address{Universit\"at Basel\\
Departement Mathematik und Informatik\\
Spiegelgasse 1\\ CH--4051 Basel\\ Switzerland}
\email{pierre-marie.poloni@unibas.ch}

\thanks{Kutzschebauch partially supported by Schweizerischer Nationalfonds Grant 200021-116165 }
\date{\today}
\bibliographystyle{amsalpha}
\begin{abstract} 
We enlarge the class of open Riemann surfaces known to be holomorphically  embeddable into the plane by allowing them to have additional isolated punctures  compared to the  known embedding results.
\end{abstract}

\maketitle


\section{Introduction} The problem   whether every open Riemann surface embeds properly holomorphically into $\C^2$ is notoriously difficult and has attracted a lot of attention in the past decades.  This is  the last remaining case  of Otto Forsters conjecture about the embedding dimension
of Stein manifolds, since Gromov, Eliashberg and Sch\"urmann proved that any Stein manifold of dimension $n$ admits a proper holomorphic embedding into $\C^N$ for $N= \left[3n/2\right]+1$. Their proof uses the Oka principle which does not apply in the case of Riemann surfaces in $\C^2$.  For a history of the problem and known results for dimension $n=1$ we refer to the monograph of Forstneri\v c \cite{For}. 

An important class of open Riemann surfaces are surfaces  $X= \hat X \setminus C$ obtained after removing a closed set $C$ from a compact Riemann
surface $ \hat X$. If $C$ consists only of "holes", i.e.~~of open sets bounded by smooth curves, we talk about bordered Riemann surfaces. A breakthrough has been obtained by Forn\ae ss Wold  in his dissertation  (see \cite{Wold1, Wold2, Wold3}) which gives strong tools  in that case. For example, it was recently shown in \cite{FW2013} that every domain in the Riemann sphere with at most countably many boundary components, none of which are points, admits a proper holomorphic embedding into $\C^2$. 

 On the other hand, not many methods are known how to deal with isolated points in the boundary.  In particular, examples of the form  $X= \C \setminus \{    \cup_ {n\in\mathbb{N}} \{ a_n, b_n \}\cup \{0\}\}$, where $a_n$ and $b_n$ denote two sequences of non-zero complex numbers converging to infinity and $0$ respectively, were   promoted  by Globevnik  and Forn\ae ss as the easiest examples   not known to be holomorphically embeddable into $\C^2$. Also  it was believed  in the Several Complex Variables  community to be difficult to embed an elliptic curve (torus) with finitely many points removed properly holomorphically into $\C^2$. Surprisingly enough, it had been overlooked (see for example the overview article \cite{BN}) that Sathaye had solved this problem algebraically back in 1977 \cite{Sat}. 
 
The present  paper grew out of an attempt to understand Sathaye's result and proof  over the field  of complex numbers in a more geometric way. 
Our main result is the following generalization of Sathaye's result together with the solution to the  problem  promoted  by Globevnik  and Forn\ae ss. 

\begin{Thm}\label{main} The following open Riemann surfaces admit a proper holomorphic embedding into $\C^2$:

\begin{itemize} 

\item the Riemann sphere with a (nonempty) countable closed subset  containing  at most 2 accumulation points  removed,

\item any compact Riemann surface of genus $1$ (torus) with  a (nonempty) closed countable set containing at most one accumulation point removed,

\item any hyperelliptic Riemann surface with a countable closed set $C$ removed with the properties that $C$ contains a fibre $F=R^{-1} (p)$ (consisting either of two points or of a single Weierstrass point) of the Riemann map $R$ and  that  all accumulation points of $C$ are contained in that fibre  $F$.

\end{itemize}
The same holds if $X$ is as above with  additionally a finite number of smoothly bounded regions removed.
\end{Thm}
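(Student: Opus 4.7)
The plan is to begin with a known (algebraic or explicit) proper holomorphic embedding of a \emph{base surface} --- the compact Riemann surface with only the accumulation set of $C$ removed --- and then to push the isolated points of $C$ to infinity one at a time, using the combination of Forn\ae ss Wold's exposing--points technique with Andersén--Lempert theory, so that in the limit one obtains a proper embedding of $X$ itself.

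For the three cases the required base embeddings are available: in the first bullet the base is $\PP^1$ minus one or two points, which embeds as $\C\subset\C^2$ or as $\C^*\simeq\{xy=1\}$; in the second bullet Sathaye's theorem supplies a proper algebraic embedding of the torus minus one point into $\C^2$; in the third bullet the hyperelliptic curve $\hat X$ minus the fibre $F$ of the double cover embeds as an affine hyperelliptic curve $\{y^2=p(x)\}$ (or a Weierstrass variant if $F$ is a single Weierstrass point). With the base embedded, enumerate the isolated points of $C$ lying outside the accumulation set as $\{p_n\}_{n\ge 1}$.

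Inductively, at the $n$-th step I would first locally modify the current embedding near $p_n$ by a Forn\ae ss Wold exposing perturbation so that a small arc through $p_n$ becomes tangent to a path running out to infinity in $\C^2$; then I would apply an Andersén--Lempert automorphism of $\C^2$ that sends the (image of the) exposed point to infinity while perturbing a prescribed large compact set, together with all previously pushed tentacles, by less than $2^{-n}$. Standard convergence arguments then produce, in the limit, a proper holomorphic embedding of $X=\hat X\setminus C$ into $\C^2$. For the last sentence of the theorem one treats any additional smoothly bounded region exactly as in Forn\ae ss Wold's original work: expose a point on its boundary and push the whole boundary curve to infinity by a further Andersén--Lempert automorphism, leaving the rest of the embedding almost fixed.

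The main obstacle will be the book-keeping in this limit process. One must simultaneously (a) keep the push-to-infinity step well defined on a neighbourhood of the previously treated points, (b) ensure that the infinitely many tentacles running to infinity stay disjoint and have no accumulation in $\C^2$, and (c) arrange the geometry to be compatible with the way the sequence $\{p_n\}$ accumulates towards the one or two prescribed accumulation points (respectively towards the fibre $F$). The hyperelliptic case is the most delicate, because the exposing direction at each $p_n$ must match the branch structure of the base embedding near $F$; this is exactly where the hypothesis that every accumulation point of $C$ lies in a single fibre $F$ of the hyperelliptic involution becomes essential.
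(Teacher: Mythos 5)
Your proposal has a genuine gap at its central step. The isolated points of $C$ are \emph{interior} points of the base surface you start with (the compact surface minus only the accumulation set), so after you embed that base surface, each $p_n$ sits at a finite point of an embedded curve in $\C^2$. No composition with automorphisms of $\C^2$ --- Anders\'en--Lempert or otherwise --- can send such a point to infinity while keeping the rest of the curve: automorphisms are biholomorphisms of $\C^2$ and take finite points to finite points, so the image of the base surface always remains a properly embedded copy of the \emph{whole} base, never of the base with $p_n$ deleted. Likewise, the exposing-points technique does not apply here: it is designed for boundary arcs of bordered surfaces, and an isolated puncture has no boundary arc to expose. To delete a puncture one must change the map itself, by composing with something that has a \emph{pole} at the puncture, i.e.\ a rational shear rather than an automorphism. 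Even if you replace your automorphisms by such rational shears applied one at a time, the ``standard convergence arguments'' you invoke for infinitely many punctures accumulating at the prescribed points (or at the fibre $F$) are exactly the open difficulty that made these examples famous; your outline acknowledges the book-keeping problem but does not resolve it.

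The paper avoids any limit process altogether. It embeds the base surface as an affine curve with \emph{proper} projection $\pi_X$ to the $x$-axis ($\C$ or $\C^*\simeq\{y^2=x^2-1\}$, the Weierstrass cubic $y^2=x(x-1)(x-A)$, or $y^2=x(x-1)(x-A_1)\cdots(x-A_N)$), so the $x$-coordinates of the punctures form a discrete set; then it applies a \emph{single} birational shear $\alpha(x,y)=\bigl(x,\tfrac{y-a(x)}{b(x)}\bigr)$, where $b$ (from the Weierstrass theorem) has simple zeros exactly at those $x$-coordinates and $a$ (from Mittag-Leffler interpolation) matches the second coordinate of the one non-puncture point in each affected fibre. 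A local L'Hopital argument (Lemma \ref{local}) shows $\alpha$ stays an embedding at the kept points, while the poles at the punctures push them to infinity, and properness follows from properness of $\pi_X$ (Proposition \ref{general}). The hypotheses on where $C$ accumulates are used precisely to place the accumulation set at infinity of such an affine model with proper two-to-one projection --- not, as in your sketch, to control exposing directions. Finally, the extra smoothly bounded regions are handled in the paper not by re-running a push-to-infinity argument but by quoting the Forstneri\v c--Wold theorem that a nonproper holomorphic embedding of a bordered surface can be approximated by proper ones.
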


Note that when the removed set is a finite set, the second and third cases  correspond to the theorem of Sathaye \cite{Sat} and we give a new proof of it.

We thank J\'er\'emy Blanc for inspiring discussions on the subject.
\section{proofs}

\begin{Lem} \label{local}
Let $\phi$ be an embedding of the disc $\Delta$ as the  graph $ \phi (x) = (x, f(x))$ of a holomorphic map $f: \Delta \to \C$. Let $ \alpha : \C^2 \to \C^2$ be a birational map of $\C^2$ of the form  $\alpha (x, y) = (x, \frac{ y-a(x)} {b(x)}) $ with the property that  $a(0) = f(0)$ and $ b$ has no zero's except  one simple zero at $0$ . Then $\alpha \circ \phi$  is again an embedding as a graph.

\end{Lem}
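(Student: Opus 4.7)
The plan is to compute the composition $\alpha \circ \phi$ explicitly and observe that it is again a graph over $\Delta$. The only non-obvious point will be to show that the resulting second coordinate extends holomorphically across the single zero of $b$ at the origin.

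Substituting $y = f(x)$ into $\alpha$ gives
\[
(\alpha \circ \phi)(x) = \left( x,\ \frac{f(x) - a(x)}{b(x)} \right).
\]
Set $g(x) := (f(x) - a(x))/b(x)$. Since by hypothesis $b$ has no zeros on $\Delta$ other than a simple one at $0$, the function $g$ is already holomorphic on $\Delta \setminus \{0\}$ (with $a$ being holomorphic there as part of a birational map regular at $0$). Near the origin I would write $b(x) = x\, u(x)$ with $u$ holomorphic and nonvanishing, and use the normalization $a(0) = f(0)$ to write $f(x) - a(x) = x\, v(x)$ with $v$ holomorphic; then $g(x) = v(x)/u(x)$ extends holomorphically across $0$.

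Once $g$ is holomorphic on the full disc, the map $x \mapsto (x, g(x))$ is a holomorphic graph over $\Delta$, hence automatically an injective holomorphic immersion, i.e.\ an embedding. The entire content of the lemma is therefore the removability of the potential pole at $0$, which is forced by the single hypothesis $a(0) = f(0)$ together with the simplicity of the zero of $b$; this is the only step requiring verification, and it is immediate from the factorization above.
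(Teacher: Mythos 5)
Your proof is correct and follows essentially the same route as the paper: the whole point is that the only possible singularity of $g(x)=\frac{f(x)-a(x)}{b(x)}$ is at $0$, and it is removable because $a(0)=f(0)$ and the zero of $b$ is simple. The paper sees this via L'Hopital (the limit $\frac{f'(0)-a'(0)}{b'(0)}$ is finite), while you factor $b(x)=x\,u(x)$ and $f(x)-a(x)=x\,v(x)$; these are the same argument in different clothing.
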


\begin{proof} Outside $x=0$ the map $\alpha \circ \phi$  is the graph of $ g(x) =  \frac { f(x)-a(x)} {b(x)}$ which is holomorphic there.  At $x=0$, we know by L'Hopital rule that $g(x)$  converges to 
$\frac {f^\prime (0)-a^\prime (0) }  {b^\prime (0)}$, 
which  is finite by the assumption that $b$ has a simple zero.  Hence, we get a holomorphic extension of $g$ to $0$.
\end{proof}

\begin{Prop} \label{general} Given a Riemann surface $X$ embedded into $\C^2$, denote by $\pi_X$  the restriction  to $X$ of the projection $\pi: \C^2 \to \C$ onto the first coordinate and suppose that $\pi_X$ is a proper map. Let $P\subset X$ be a closed discrete subset of $X$. Suppose that if a  fiber  $\pi_X^{-1}  (x) $  contains some points of $P$, then one of the following two cases holds: either  $\pi_X^{-1}  (x) $  consists only in points of $P$, or  all but exactly one point of  $ \pi^{-1}  (x)$ are in $P$ and  $\pi _X $ is moreover a submersion at the single point in $ \pi^{-1}  (x) \setminus P$. Then, $X\setminus P$
admits a proper holomorphic embedding into $\C^2$.
\end{Prop}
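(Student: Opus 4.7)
The plan is to construct a single meromorphic shear $\alpha(x,y) = (x, (y-a(x))/b(x))$ with entire coefficients $a, b \in \cO(\C)$ that simultaneously sends every point of $P$ to infinity while extending smoothly across each remaining ``to-keep'' point of $X$ via Lemma \ref{local}. First I verify that $B := \pi_X(P) \subset \C$ is closed and discrete: properness of $\pi_X$ makes it a closed map, so $B$ is closed; any accumulation point of $B$ would, via compactness of $\pi_X^{-1}$ of its small neighborhood together with closedness and discreteness of $P$, yield a contradiction. Enumerate $B = \{x_i\}_{i \in I}$. Each fiber $\pi_X^{-1}(x_i)$ is a compact discrete subset of the Riemann surface $X$, hence finite; by hypothesis it is either contained in $P$ or contains a single point $q_i = (x_i, y_i^*) \notin P$ at which $\pi_X$ is a submersion.

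Next I choose target values $c_i \in \C$: set $c_i := y_i^*$ whenever $q_i$ exists, and otherwise pick $c_i$ distinct from each of the finitely many $y$-coordinates of points of $P \cap \pi_X^{-1}(x_i)$. By the Weierstrass product theorem there is an entire function $b$ whose zero set is exactly $B$ with all zeros simple, and by the classical interpolation theorem for entire functions on a closed discrete subset of $\C$ there is an entire function $a$ with $a(x_i) = c_i$ for every $i$. Define
\[
\alpha(x,y) \;=\; \left(x,\, \frac{y-a(x)}{b(x)}\right),
\]
a meromorphic self-map of $\C^2$ that is holomorphic on $\C^2 \setminus \bigcup_i \{x = x_i\}$.

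The claim is that $\psi := \alpha \circ \phi$, restricted to $X \setminus P$, extends to a proper holomorphic embedding into $\C^2$. Away from the lines $x = x_i$ the composition $\psi$ is holomorphic. At each $q_i$, the submersion hypothesis presents $X$ locally as a graph $y = f(x)$ over a disc $\Delta \ni x_i$ with $f(x_i) = y_i^* = a(x_i)$; shrinking $\Delta$ so that $b$ has no other zeros inside $\Delta$, Lemma \ref{local} (after translating to the coordinate $u = x - x_i$) extends $\psi$ holomorphically across $q_i$. Injectivity follows since $\alpha$ preserves the first coordinate and $X \setminus P$ meets each fiber over $x_i$ in at most one point. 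For properness, take compact $K \subset \C^2$ with first-coordinate projection $K_1$; then $L := \pi_X^{-1}(K_1)$ is compact in $X$ and $L \cap P$ is finite. Near each $p \in L \cap P$ the numerator $y - a(x)$ has a nonzero limit (by our choice of $c_i$) while $b(x) \to 0$, so $\psi \to \infty$ at $p$; thus $\psi^{-1}(K)$ is bounded away from $L \cap P$ inside $L$, and is compact in $X \setminus P$.

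The main technical hurdle is the potentially infinite nature of $B$: if $B$ were finite, Lemma \ref{local} applied repeatedly (with polynomial $a, b$) would suffice, but the infinite case forces us to pass from rational to entire coefficients. The key observation enabling the single-shot construction is that Lemma \ref{local}'s L'H\^opital argument is purely local at each zero of $b$, so it remains valid at every $q_i$ with an entire denominator, collapsing an infinite sequence of local birational modifications into one global meromorphic shear.
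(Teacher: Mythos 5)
Your proposal is correct and follows essentially the same route as the paper: a single global shear $\alpha(x,y)=(x,(y-a(x))/b(x))$ with $b$ given by the Weierstrass theorem (simple zeros exactly at $\pi_X(P)$) and $a$ by interpolation at those zeros, using Lemma \ref{local} at the retained fiber points, injectivity from the fiber-preserving form of $\alpha$, and properness from the poles created at the points of $P$ together with the properness of $\pi_X$. The only difference is that you spell out some details the paper leaves implicit (discreteness of $\pi_X(P)$, finiteness of fibers, and the compactness argument for properness), which is harmless.
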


\begin{proof}
Let $x_i$, $i = 1, 2, \ldots$, denote the first coordinates of the   points of $P$. Since $\pi_X$ is proper, this is a discrete (possibly finite) subset of $\C$. By Weierstrass theorem we can thus construct a  holomorphic function $b \in \cO (\C) $ with simple zero's at the points $x_i$ (and no other zero's). 
For each $i$, we choose a complex number $y_i$ as follows. If the set $\pi_X^{-1} (x_i) \setminus P$ is not empty, then we let $y_i$ be equal to the second coordinate of the unique point of $\pi_X^{-1} (x_i) \setminus P$. Otherwise, we let $y_i$ be any complex number different from all second coordinates of  points from $\pi_X^{-1} (x_i)$.  By Mittag-Leffler approximation theorem we can construct a holomorphic function $a \in \cO (\C)$ with $a(x_i) = y_i$. Now let $ \alpha : \C^2 \to \C^2$ be the birational map of $\C^2$ defined by  $\alpha (x, y) = (x, \frac{ y-a(x)} {b(x)})$. Since $\alpha$ is a bimeromorphic  map which is defined on $X\setminus P$,
the restriction of $\alpha$ to $X\setminus P$ is injective. By Lemma \ref{local} the restriction of $\alpha$ to a neighbourhood of a point in $\pi_X^{-1} (x_i) \setminus P$ is an injective holomorphic immersion.  By construction, the restriction of $\alpha$ to a neighbourhood of a point in $p \in P$ is the graph of a meromorphic map with a single pole in $p$. This, together with the properness of $\pi_X$,  implies that the restriction of $\alpha$ to $X\setminus P$
is proper, giving thus the desired proper holomorphic embedding into $\C^2$.
\end{proof}

\begin{Cor} \label{1} The Riemann sphere with a (nonempty) countable closed subset containing at most two accumulation points  removed 
admits a proper holomorphic embedding into $\C^2$.
\end{Cor}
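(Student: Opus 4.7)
My plan is to apply Proposition \ref{general} with the parent Riemann surface $X \subset \C^2$ chosen according to the number of accumulation points of the countable closed set $S$ to be removed from $\PP^1$. After a preliminary M\"obius change of coordinates I may assume that the accumulation points of $S$ are contained in $\{0, \infty\}$, and that $\infty \in S$ (possible since $S$ is nonempty).

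For the case where $S$ has at most one accumulation point, placed at $\infty$, I take $X$ to be the $x$-axis $\{y = 0\} \subset \C^2$, identified with $\PP^1 \setminus \{\infty\} = \C$. Then $\pi_X$ is the identity, hence trivially proper, every fiber is a singleton, and $P := S \setminus \{\infty\}$ is closed and discrete in $X$ because its only possible accumulation point, $\infty$, does not lie in $\C$. The fiber hypothesis of Proposition \ref{general} is vacuous, and the proposition yields a proper holomorphic embedding $\PP^1 \setminus S = X \setminus P \hookrightarrow \C^2$.

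The more interesting case is that of exactly two accumulation points, $0$ and $\infty$. The naive embedding $\C^* \hookrightarrow \C^2$, $t \mapsto (t, 1/t)$, is unsuitable because its first projection fails to be proper. I instead use $\phi: \C^* \to \C^2$, $t \mapsto (t + 1/t, \, t - 1/t)$, whose image is the smooth affine conic $X = \{x^2 - y^2 = 4\}$; properness of $\pi_X$ then follows from the identity $y^2 = x^2 - 4$. This projection is two-to-one away from $x = \pm 2$, the second preimage of $\pi_X(\phi(t))$ being $\phi(1/t)$, and ramifies over $x = \pm 2$ at the points $(\pm 2, 0) = \phi(\pm 1)$. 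Set $P := \phi(S \setminus \{0, \infty\})$; this is a closed discrete subset of $X$. To check the fiber hypothesis of Proposition \ref{general}, consider a fiber through a point $\phi(t) \in P$: it is either the singleton $\{(\pm 2, 0)\} \subset P$ (which occurs only when $t = \pm 1$), or it contains a second point $\phi(1/t)$; in the latter case, either $\phi(1/t) \in P$, so the entire fiber lies in $P$, or $\phi(1/t) \notin P$, in which case $1/t \ne \pm 1$ automatically (else the fiber would be a singleton), so $d\pi_X/dt = 1 - 1/t^2$ is nonzero at $1/t$ and $\pi_X$ is a submersion at $\phi(1/t)$.

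The main obstacle will be the two-accumulation-point case: one needs a closed embedding of $\C^*$ into $\C^2$ with proper first projection, and then the natural involution $t \leftrightarrow 1/t$ on fibers of $\pi_X$ must be shown to interact well with the prescribed set $P$. The cases with zero or one accumulation points reduce to a direct application of Proposition \ref{general} with essentially no extra work.
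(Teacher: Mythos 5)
Your proof is correct and follows essentially the same route as the paper: the decisive two-accumulation-point case is handled exactly as in the paper, by a linear change of coordinates turning $t\mapsto(t,1/t)$ into an embedding of $\C^*$ as a smooth conic (the paper uses $y^2=x^2-1$, you use $x^2-y^2=4$) whose first projection is a proper $2{:}1$ covering, followed by Proposition \ref{general}. The only cosmetic difference is that you also route the zero- and one-accumulation-point cases through Proposition \ref{general} with $X=\C$ embedded as the $x$-axis, whereas the paper writes down the graph of a meromorphic function directly; and you spell out the fiber/submersion condition that the paper leaves implicit.
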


\begin{proof} Let $X = \PP^1 (\C) \setminus C$, where $C$ is countable closed with at most two accumulation points.
If there are no accumulation points, i.e.~if $C= \{c_1, c_2, \ldots , c_n\}$ is a finite set, we can assume that  one of the points, say $c_1$, is  at infinity and then embed $ \PP^1 (\C) \setminus C = \C \setminus  \{c_2, c_3, \ldots , c_n\}$ as the graph of the meromorphic function $\prod_{i=2}^n \frac 1 {x-c_i}$. If there is one accumulation point, we can assume that this point is at infinity and that $X$ is simply the complement in $\C$ of a sequence of points converging to infinity. It is then easy to embed $X$ as the graph of a meromorphic function with poles at the points of the removed sequence. 

If there are two accumulation points, we can first apply an automorphism of $\PP^1 (\C)$   that sends these two points   to $\infty$
and $0$, respectively. Considering $\C^\star$  embedded into $\C^2$ by $x \mapsto (x, \frac 1 x )$,  our   aim is now  to embed  $\C^\star \setminus P$, where $P=C\setminus\{0, \infty\}$ is a discrete closed subset into $\C^2$. By a suitable linear change of coordinates,    we can obtain $\C^\star$ as being the zero set of $y^2 = x^2-1$ in $\C^2$. The projection to the first coordinate is then a proper
$2:1$ ramified covering and  together with the discrete set $P$   fulfils the conditions of Proposition \ref{general} whose application finishes the proof.
\end{proof}

\begin{Cor} \label{2}  Any compact Riemann surface of genus $1$ (torus) with a countable closed  set  containing  at most one accumulation point removed admits a proper holomorphic embedding into $\C^2$.
\end{Cor}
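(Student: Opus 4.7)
The plan is to mimic the proof of Corollary~\ref{1} and reduce Corollary~\ref{2} to a direct application of Proposition~\ref{general}. The Weierstrass parametrization of an affine elliptic curve will play here the role played in Corollary~\ref{1} by the embedding $x\mapsto(x,1/x)$ of $\C^\ast$.

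First I would select a distinguished point $q\in C$: if $C$ has an accumulation point, take $q$ to be that point, and otherwise $C$ is finite and I take $q$ to be any element of $C$. Then I would embed the affine Riemann surface $T\setminus\{q\}$ into $\C^2$ via the Weierstrass parametrization $p\mapsto(\wp(p),\wp'(p))$ associated to the torus $T$ with pole at $q$. The image is the smooth affine cubic $y^2=4x^3-g_2x-g_3$, and the projection $\pi$ onto the first coordinate is a proper $2:1$ ramified cover which fails to be a submersion exactly at the three affine Weierstrass points; these are precisely the points whose fiber under $\pi$ has cardinality one.

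Next I would set $P:=C\setminus\{q\}$ and verify the hypothesis of Proposition~\ref{general} for the embedded surface $X=T\setminus\{q\}$ and the subset $P$. Closedness and discreteness of $P$ in $X$ are trivial when $C$ is finite, and in the accumulation case they follow from the fact that the unique accumulation point $q$ of $C$ has been removed from $X$. The fiber condition is automatic from the ramification structure of $\pi$: a one-point fiber meeting $P$ is necessarily contained in $P$, whereas a two-point fiber meeting $P$ either lies entirely in $P$, or meets $P$ in exactly one point, in which case the remaining point of the fiber is unramified and hence a submersion point of $\pi$. Proposition~\ref{general} then yields a proper holomorphic embedding of $T\setminus C=(T\setminus\{q\})\setminus P$ into $\C^2$.

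I do not expect any genuine obstacle in this argument. The only ingredient beyond Proposition~\ref{general} is the classical Weierstrass parametrization together with the elementary case analysis above. The efficiency comes from the fact that Proposition~\ref{general} handles countable closed discrete subsets just as easily as finite ones, so that the presence of an accumulation point in $C$ costs nothing provided we are careful to place it at the ``point at infinity'' of the Weierstrass embedding.
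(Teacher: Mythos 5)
Your proposal is correct and takes essentially the same route as the paper: both realize the once-punctured torus as an affine cubic in $\C^2$ (you via the Weierstrass $\wp$-parametrization with pole at the chosen point $q$, the paper via the Weierstrass normal form together with transitivity of the torus automorphism group to move $q$ to infinity) and then apply Proposition~\ref{general} to the proper $2:1$ projection onto the first coordinate. The only difference is that you spell out explicitly the verification of the fiber condition and the discreteness of $P=C\setminus\{q\}$, which the paper leaves implicit.
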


\begin{proof} Any compact Riemann surface of genus $1$ can be written (in the Weierstrass normal form) as the Riemann surface of the square root of a polynomial with three distinct zero's. This means that it is the compactification
(by one point at infinity) of the affine submanifold of $\C^2$ given by
$$ y^2 = x (x-1) (x-A)$$
where $A$ is any complex number distinct from $0$ and $1$.  Since the automorphism group of a torus acts transitively we can, in case of one accumulation point, put that  point to $\infty$  and in case of removing a finite set put one of these finitely many points  to $\infty$. Again to remove the remaining  discrete set, we can apply Proposition \ref{general} to the projection to the $x$-coordinate.
\end{proof}

Recall that a hyperelliptic Riemann surface $\hat X$ is the compactification of  the square root of a polynomial with $n \ge 4$ distinct zero's.  It is compactified by two points if $n$ is even and  by one single point at infinity otherwise. It is the class of compact Riemann surfaces which admit a (unique up to automorphisms of $\PP^1 (\C)$) $2:1$ ramified covering $R: \hat X \to \PP^1 (\C)$ over the Riemann sphere $\PP^1 (\C)$. If the surface is of genus $g$, then 
 the number of ramification points is $2g+2$ by the Riemann-Hurwitz formula.  These points are exactly the Weierstrass
 points of $\hat X$.
 
 \begin{Cor} \label{3} Any hyperelliptic Riemann surface with a countable closed set $C$ removed  with the properties that $C$ contains a fibre $F=R^{-1} (p)$ of the Riemann map $R$ (consisting either of two points or a single Weierstrass point)  and  that  all accumulation points of $C$ are contained in that fibre  $F$
 admits a proper holomorphic embedding into $\C^2$.
 \end{Cor}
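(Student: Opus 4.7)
The plan is to follow the template established by Corollaries \ref{1} and \ref{2}: realise $\hat X \setminus F$ as a closed smooth algebraic subvariety of $\C^2$ on which the first coordinate projection is proper and at most $2:1$, and then invoke Proposition \ref{general} to remove the residual discrete set $P := C \setminus F$.

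First I would normalise $F$ by applying an automorphism of $\PP^1(\C)$ that sends $p$ to $\infty$, so that $F = R^{-1}(\infty)$. Two cases arise according to whether $p$ is a branch point of $R$ or not. If $F$ is a single Weierstrass point, then $\infty$ becomes a branch point of $R$, the associated polynomial $P$ has odd degree $2g+1$, and $\hat X \setminus F$ is biholomorphic to the smooth affine hyperelliptic curve $\{y^2 = P(x)\} \subset \C^2$. If instead $F$ consists of two non-Weierstrass points, then $\deg P = 2g+2$, and $\hat X \setminus F$ is again the smooth affine curve $\{y^2 = P(x)\} \subset \C^2$, compactified by the two points of $F$. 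In either case the restriction $\pi$ of the first projection to this affine model is a proper $2:1$ ramified cover, whose branch locus consists precisely of the finite Weierstrass points (the zeros of $P$). Moreover, since by hypothesis every accumulation point of $C$ lies in $F$, the set $P = C \setminus F$ is closed and discrete in $\hat X \setminus F$.

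The main step is then the verification of the fibre condition of Proposition \ref{general} for the pair $(\pi, P)$. Each fibre of $\pi$ has either one point (at the finite Weierstrass points, where $\pi$ fails to be a submersion) or two points (everywhere else, where $\pi$ is a local biholomorphism on each sheet). A singleton fibre intersecting $P$ is automatically entirely in $P$, which matches the first alternative of the proposition. A doubleton fibre intersecting $P$ either lies fully in $P$, or misses exactly one point, which is then automatically a non-branch point and hence a submersion point for $\pi$, matching the second alternative. Proposition \ref{general} therefore applies, producing a proper holomorphic embedding of $(\hat X \setminus F)\setminus P = \hat X \setminus C$ into $\C^2$. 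The only real obstacle is the initial case distinction: one must correctly pair the parity of $\deg P$ with the cardinality of $F$ so that the affine model $\{y^2 = P(x)\}$ is genuinely $\hat X \setminus F$ and $F$ is genuinely the fibre over $\infty$; after that, the argument proceeds by direct application of the preceding proposition.
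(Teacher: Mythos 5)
Your proposal is correct and follows essentially the same route as the paper: move the fibre $F$ to infinity, realise $\hat X\setminus F$ as the smooth affine curve $y^2=P(x)$ with proper $2:1$ first-coordinate projection, and apply Proposition \ref{general} to remove the discrete set $C\setminus F$. The only difference is that you spell out the parity/case distinction and the verification of the fibre hypothesis of Proposition \ref{general}, which the paper leaves implicit.
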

 
 \begin{proof} By the transitivity of the automorphism group of $\PP^1 (\C)$ we can assume that $F$ is at infinity
 and the problem is to embed an affine curve given by 
 $$y^2 = x (x-1) (x-A_1) \cdots (x-A_N)$$ 
 with a closed discrete set $P$ removed. Proposition \ref{general} does the job.
 
\end{proof}

Exactly the same proof implies.

 \begin{Cor} \label{4} Let $f \in \cO (\C)$ be any holomorphic function with  (countably many) simple roots. Then the Stein manifold  $x^2 = f (y)$ (of infinite genus) with
 any  closed discrete subset removed has a proper holomorphic embedding into $\C^2$.
 \end{Cor}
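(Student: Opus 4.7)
The plan is to identify $X$ with the smooth closed Stein submanifold $\{(y,x) \in \C^2 : x^2 = f(y)\}$ of $\C^2$, viewing $y$ as the first coordinate, and to apply Proposition \ref{general} directly to the projection $\pi_X(y,x) = y$, with the given closed discrete subset $P \subset X$ playing the role of the set to be removed. This is the exact analogue of Corollary \ref{3}, with the polynomial on the right-hand side now replaced by an arbitrary entire function with simple zeros.

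First I would verify that $X$ is smooth and that $\pi_X$ is proper. The gradient of $F(y,x) = x^2 - f(y)$ is $(-f'(y), 2x)$, which vanishes only when $x = 0$ and $f'(y) = 0$; but $x = 0$ on $X$ forces $f(y) = 0$, and the simplicity of the zeros of $f$ then excludes $f'(y) = 0$, so $X$ is smooth. For properness, any compact $K \subset \C$ satisfies $\pi_X^{-1}(K) \subset K \times \{|x|^2 \le \max_{y \in K} |f(y)|\}$, which is compact because $f$, being entire, is bounded on compact sets. This is the only place where the transcendence of $f$ (compared with Corollary \ref{3}) needs to be noted, and it causes no difficulty.

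Next I would verify the fiber-structure hypothesis of Proposition \ref{general}. A fiber $\pi_X^{-1}(y_0)$ reduces to the single branch point $(y_0, 0)$ when $f(y_0) = 0$, and otherwise consists of two distinct points $(y_0, \pm\sqrt{f(y_0)})$. Simplicity of the zeros of $f$ further implies that near a branch point $x$ is a local parameter, with $y - y_0 = f'(y_0)^{-1} x^2 + O(x^3)$, so $\pi_X$ is a submersion exactly off the branch locus. Consequently, for any closed discrete $P \subset X$, every fiber meeting $P$ falls into one of the two alternatives in Proposition \ref{general}: either it is entirely contained in $P$ (this covers both the one-point branch fibers and the two-point fibers with both points in $P$), or it consists of two points exactly one of which lies in $P$, in which case the remaining point lies off the branch locus and is therefore a submersion point. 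Proposition \ref{general} then immediately produces the desired proper holomorphic embedding of $X \setminus P$ into $\C^2$, and no serious obstacle arises; the argument is a routine adaptation of the hyperelliptic case.
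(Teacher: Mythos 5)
Your proposal is correct and follows essentially the same route as the paper, which proves Corollary \ref{4} by the same argument as Corollary \ref{3}, namely by applying Proposition \ref{general} to the projection of the curve $x^2=f(y)$ onto the $y$-coordinate. Your additional verifications (smoothness from simplicity of the zeros, properness of the projection, and the fiber dichotomy) are exactly the routine checks implicit in the paper's ``exactly the same proof'' remark.
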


 Proof of Theorem \ref{main}: The three cases in our main theorem correspond to Corollaries \ref{1}, \ref{2}, \ref{3}. The last assertion is a consequence of the following result of Forstneri\v c and Wold \cite[Corollary 1.2]{FW}.
 
 \begin{Thm} Assume that $X$ is a compact bordered Riemann surface with boundary of class $C^r$ for some $r > 1$. If $f: X \hookrightarrow \C^2$
  is a $C^1$ embedding (not necessarily proper) that is holomorphic in the interior $X^0 = X \setminus  \partial X$, then $f$ can be approximated uniformly on compacts in $X^0$ by proper holomorphic embeddings $X^0 \hookrightarrow  \C^2$.
 \end{Thm}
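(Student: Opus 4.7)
The plan is to follow the Forstnerič--Wold strategy of combining \emph{exposition of boundary points} with the Andersen--Lempert theorem. First I would exhaust the interior $X^0$ by an increasing sequence of relatively compact, Runge, bordered sub-surfaces $K_1 \Subset K_2 \Subset \cdots$ with $\bigcup_n K_n = X^0$. The goal is then to construct inductively a sequence of holomorphic embeddings $f_n : X^0 \hookrightarrow \C^2$, each sufficiently close to $f_{n-1}$ on $K_{n-1}$, such that the image $f_n(X^0 \setminus K_n)$ lies outside a ball of radius $n$. A standard Cauchy-sequence argument will then produce a uniform-on-compacts limit $F : X^0 \to \C^2$, and properness will be automatic from the growth conditions built into the construction.

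The inductive step from $f_{n-1}$ to $f_n$ has two substeps. The first is the \emph{exposing move}: fix finitely many points $p_1,\dots,p_k \in \partial X$, at least one on each boundary component. Using a local chart near each $p_j$ and a $C^r$ boundary model, I would modify $f_{n-1}$ in a collar of $\partial X$ (disjoint from $K_{n-1}$) so that each $p_j$ becomes the endpoint of a long, thin holomorphic arc $\Gamma_j \subset \C^2$ reaching far from the origin, while keeping the map a $C^1$ embedding that remains holomorphic in $X^0$. This yields an auxiliary embedding $g$ which agrees with $f_{n-1}$ on $K_{n-1}$ up to prescribed error and whose image has $k$ outward ``fingers.''

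The second substep is to stretch these fingers by a holomorphic automorphism of $\C^2$. Because $g(\overline{K_n}) \cup \Gamma_1 \cup \cdots \cup \Gamma_k$ is a polynomially convex compact (after choosing the exposed arcs carefully), the Andersen--Lempert theorem provides a $\Phi \in \Aut(\C^2)$ which is arbitrarily close to the identity on $g(\overline{K_{n-1}})$ but drags a neighborhood of $g(\partial X)$ in the image past the sphere of radius $n$, by following a prescribed isotopy along the fingers. Setting $f_n := \Phi \circ g$ completes the induction: $f_n$ is close to $f_{n-1}$ on $K_{n-1}$, and $f_n(X^0 \setminus K_n)$ sits outside the ball of radius $n$.

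The hard part is the exposing move under only $C^r$ boundary regularity with $r > 1$: one must produce a local holomorphic deformation of $f_{n-1}$ near each $p_j$ that remains globally injective at every intermediate stage, attaches a long embedded arc with prescribed endpoint behavior, and does not disturb $f_{n-1}$ on $K_{n-1}$. This requires a careful boundary-chart model together with a Mergelyan-type approximation on the union of $g(\overline{K_n})$ and the attached arcs, and is precisely the technical heart of the Forstnerič--Wold argument. Granted that step, convergence of $(f_n)$ to an injective holomorphic immersion $F$ follows by Hurwitz-type stability of embeddings under small perturbations, and properness of $F$ follows because $F^{-1}(\overline{\B_n}) \subset K_n$ is compact for every $n$.
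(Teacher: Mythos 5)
First, a point of comparison: the paper does not prove this statement at all --- it is quoted verbatim from Forstneri\v c and Wold \cite[Corollary 1.2]{FW} and used as a black box to handle the ``finitely many smoothly bounded regions removed'' part of Theorem \ref{main}. So what you have written is not an alternative to anything in this paper; it is an outline of the Forstneri\v c--Wold proof itself, and it has to be judged on whether it would actually establish that theorem. As an outline it identifies the right ingredients (exposing boundary points, Anders\'en--Lempert theory, a Wold-type induction pushing the image of a neighbourhood of the boundary out of larger and larger balls, properness from the growth built into the exhaustion), but you explicitly defer the exposing lemma (``granted that step''), and that lemma under mere $C^r$, $r>1$, boundary regularity is exactly the new technical content of \cite{FW}; deferring it means the proposal is a plan, not a proof.

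Beyond that, the inductive step as you state it has a genuine gap. You omit the decisive intermediate move of Forstneri\v c--Wold: after exposing one boundary point on each boundary curve, they apply a rational shear of the form $(x,y)\mapsto\bigl(x,\,y+\sum_j c_j/(x-a_j)\bigr)$ sending the exposed points to infinity, so that the boundary of the image becomes a finite union of embedded arcs leaving every compact set. It is only for such unbounded arcs that Wold's pushing technique works: the compact configurations (a polynomially convex compact together with pieces of curves that exit every ball) are then polynomially convex and can be isotoped outward in a Runge fashion, which is what Anders\'en--Lempert needs. With your closed boundary circles carrying ``fingers'' that merely reach far out, the polynomial convexity of $g(\overline{K_n})\cup\Gamma_1\cup\cdots\cup\Gamma_k$ and the Runge property of the isotopy are not automatic and you give no argument for them. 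Moreover, what the induction must deliver is $f_n(X^0\setminus K_n)\cap \overline{B}_n=\emptyset$, which is strictly stronger than dragging a neighbourhood of $g(\partial X)$ past the sphere of radius $n$: the image of the collar $X^0\setminus K_n$ is connected and joins $f_n(\partial K_n)$ to the boundary curves, so if part of it started inside the ball and you only control the boundary circles and $K_{n-1}$, the collar image will still cross the sphere. In the actual argument this is handled by exhausting along the unbounded boundary arcs and choosing the compacts $K_n$ in terms of where the pushed-out arcs sit, together with the bookkeeping that later automorphisms stay close to the identity on the ball of radius $n$ as well, so that the limit inherits $|F|>n-1$ off $K_n$. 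None of this is fatal to the strategy --- it is the strategy of \cite{FW} --- but as written your step from ``boundary moved out'' to ``$f_n(X^0\setminus K_n)$ outside $B_n$'' does not follow, and the exposing lemma plus the rational shear to infinity are missing rather than proved.
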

 
 \begin{Rem}Combining the proof of our main theorem with the results in \cite{KLW} (see Theorem 2) one can add
 interpolation condition on discrete sets. More precisely: let $X$ be any open Riemann surface as in Theorem \ref{main} or Corollary \ref{4}. If $\{ a_i\}_{i=1}^\infty$ is any discrete sequence of points in $X$ and $\{ b_i\}_{i=1}^\infty$ any discrete 
 subset in $\C^2$, then the proper holomorphic embedding $\varphi: X \hookrightarrow \C^2$ can be chosen to satisfy $$\varphi (a_i) = b_i  \ \  \forall \  i \in \N.$$
 \end{Rem}
 
\begin{biblist}*{labels={alphabetic}}

\bib{BN}{article}{
   author={Bell, Steven R.},
   author={Narasimhan, Raghavan},
   title={Proper holomorphic mappings of complex spaces},
   conference={
      title={Several complex variables, VI},
   },
   book={
      series={Encyclopaedia Math. Sci.},
      volume={69},
      publisher={Springer, Berlin},
   },
   date={1990},
   pages={1--38},
}

\bib{For}{book}{
   author={Forstneri\v{c}, Franc},
   title={Stein manifolds and holomorphic mappings},
   series={Ergebnisse der Mathematik und ihrer Grenzgebiete. 3. 
   },
   volume={56},
   publisher={Springer, Heidelberg},
   date={2011},
   pages={xii+489},
}
\bib{FW}{article}{
   author={Forstneri\v{c}, Franc},
   author={Wold, Erlend Forn\ae ss},
   title={Bordered Riemann surfaces in $\Bbb C^2$},
   journal={J. Math. Pures Appl. (9)},
   volume={91},
   date={2009},
   number={1},
   pages={100--114},
  }
\bib{FW2013}{article}{
   author={Forstneri\v{c}, Franc},
   author={Wold, Erlend Forn\ae ss},
   title={Embeddings of infinitely connected planar domains into ${\Bbb
   C}^2$},
   journal={Anal. PDE},
   volume={6},
   date={2013},
   number={2},
   pages={499--514},
}

\bib{KLW}{article}{
   author={Kutzschebauch, Frank},
   author={L\o w, Erik},
   author={Wold, Erlend Forn\ae ss},
   title={Embedding some Riemann surfaces into $\Bbb C^2$ with
   interpolation},
   journal={Math. Z.},
   volume={262},
   date={2009},
   number={3},
   pages={603--611},
}

\bib{Sat}{article}{
   author={Sathaye, Avinash},
   title={On planar curves},
   journal={Amer. J. Math.},
   volume={99},
   date={1977},
   number={5},
   pages={1105--1135},
}

\bib{Wold1}{article}{
   author={Wold, Erlend Forn\ae ss},
   title={Proper holomorphic embeddings of finitely and some infinitely
   connected subsets of $\Bbb C$ into $\Bbb C^2$},
   journal={Math. Z.},
   volume={252},
   date={2006},
   number={1},
   pages={1--9},
}

\bib{Wold2}{article}{
   author={Wold, Erlend Forn\ae ss},
   title={Embedding Riemann surfaces properly into $\Bbb C^2$},
   journal={Internat. J. Math.},
   volume={17},
   date={2006},
   number={8},
   pages={963--974},
}
	
\bib{Wold3}{article}{
   author={Wold, Erlend Forn\ae ss},
   title={Embedding subsets of tori properly into $\Bbb C^2$},
   journal={Ann. Inst. Fourier (Grenoble)},
   volume={57},
   date={2007},
   number={5},
}
	
\end{biblist}

\end{document}